\theoremstyle{plain}
\newtheorem{theorem}{Theorem}
\theoremstyle{remark}
\newtheorem{remark}{Remark}
\theoremstyle{definition}
\newtheorem{definition}{Definition}
\title{\LARGE \bf
Aerocapture Guidance for Augmented Bank Angle Modulation
}
\author{Kyle A. Sonandres$^{1}$, Thomas Palazzo$^{2}$, and Jonathan P. How$^{3}$
\thanks{*This work was supported by the Charles Stark Draper Laboratory, Inc.}
\thanks{$^{1}$Kyle A. Sonandres is with the Aerospace Controls Lab, Department of Aeronautics and Astronautics, and is a Draper Scholar,
        Massachusetts Institute of Technology, Cambridge, MA 02139, USA
        {\tt\small kyles7@mit.edu}}%
\thanks{$^{2}$Thomas Palazzo is with The Charles Stark Draper Laboratory, Inc., Cambridge, MA 02139, USA
        {\tt\small tpalazzo@draper.com}}%
\thanks{$^{2}$Jonathan P. How is with the Aerospace Controls Lab, Department of Aeronautics and Astronautics, Massachusetts Institute of Technology,
        Cambridge, MA 02139, USA
        {\tt\small jhow@mit.edu}}%
}
\begin{document}

\maketitle
\thispagestyle{empty}
\pagestyle{empty}

\begin{abstract}
This paper presents an optimal control solution for an aerocapture vehicle with two control inputs, bank angle and angle of attack, referred to as augmented bank angle modulation (ABAM). 
We derive the optimal control profiles using Pontryagin's Minimum Principle, 
validate the result numerically using the Gauss pseudospectral method (implemented in GPOPS),
and introduce a novel guidance algorithm, ABAMGuid, for in-flight decision making. High-fidelity Monte Carlo simulations of a Uranus aerocapture mission demonstrate that ABAMGuid can greatly improve capture success rates and reduce the propellant needed for orbital correction following the atmospheric pass.
\end{abstract}

\section{INTRODUCTION}
The National Academy of Science's 2022-2032 Planetary Science Decadal Survey (PSDS) has recommended the Uranus Orbiter and Probe (UOP) as the highest priority flagship-class mission \cite{PSDS}. Because of the distance from Earth, a mission utilizing traditional, fully-propulsive orbital insertion requires a cruise time of 13-15 years and necessitates carrying large amounts of propellant ($\approx$ 1 km/s $\Delta V$) \cite{PSDS}. Aerocapture enables fast arrival velocities, which can reduce cruise time by 2-5 years while requiring less propellant, reducing launch mass and/or leaving more mass for the scientific payload \cite{Restrepo}. For these reasons, NASA has been studying the feasibility of utilizing aerocapture at Uranus \cite{Restrepo, Shellabarger, Scoggins, Matz, Dutta}.

Aerocapture consists of converting a hyperbolic approach trajectory into a target orbit using the aerodynamic drag generated from a single atmospheric pass. Following the atmospheric pass, a periapsis raise and potentially an apoapsis correction is then performed to achieve the target orbit.

The most widely studied aerocapture guidance methodology is bank angle modulation (BAM), in which the bank angle ($\sigma$) is used to rotate the lift vector around the freestream velocity vector ($V_{\infty}$) while keeping the vehicle at a constant angle of attack ($\alpha$), lift ($L$) and drag ($D$). Such control architecture originates from early Apollo-era entry guidance algorithms \cite{moseley1969apollo}, \cite{graves1972apollo}. 
In recent decades, predictor-corrector algorithms have typically been used due to their robustness against large dispersions while remaining computationally tractable for online use \cite{Lu2}. 
PredGuid+A \cite{lafleur2011conditional} is a numerical predictor-corrector (NPC) based on the skip-entry algorithm PredGuid \cite{putnam2008improving} which has been adapted for aerocapture.
More recently, FNPAG \cite{lu2015optimal} is a two-phase NPC algorithm designed to fly the optimal bang-bang bank angle profile, solving for the switching time in phase one and a constant bank angle in phase two to meet the apoapsis targeting constraints.

Recently, there has been a growing interest in direct force control (DFC), an alternative control architecture that uses the angle of attack ($\alpha$) and side-slip angle ($\beta$) to change the lift, drag, and side-force ($Q$). 
An NPC algorithm for DFC is developed in \cite{deshmukh2020investigation} that modifies FNPAG for DFC application. 
A similar approach is deployed in \cite{matz2020development}, but it is found that the analytic solution does not exist without certain assumptions on the aerodynamics.
 They numerically find that the best solution is bang-bang in $\alpha$ and implement a two-phase NPC structure to command $\alpha$ while controlling $\beta$ with a PD controller to drive the orbit into a specified plane. 
Further numerical analysis is done in \cite{geiser2022optimal}, where the authors conclude that a bang-bang angle of attack solution is a good approximation to the optimal solution. 

Recent work has shown that controlling $\alpha$ and $\sigma$ (augmented bank angle modulation, or ABAM) can significantly improve performance at the edges of the entry corridor compared to BAM \cite{palazzo2024}.
ABAM could be an attractive solution to teams looking to minimize deviation from heritage vehicle designs. 
However, a formalization of ABAM based on optimal control theory has yet to be proposed. 
This work presents three main contributions. 
\begin{enumerate}
	\item We derive optimal control profiles for ABAM, showing that the result can be written in terms of two switching curves (the choice of which depends on $\sigma$) that predict when/ how the $\alpha$ switching sequence should occur. This extends the closed form BAM solution used in FNPAG to the two input case. 
	\item We solve the problem numerically using GPOPS, showing consistency between the numerical and theoretical results.
	\item We develop an ABAM aerocapture guidance algorithm (ABAMGuid) based on the optimal control solution. We perform Monte Carlo simulations of UOP aerocapture using ABAMGuid and FNPAG, demonstrating that ABAMGuid results in performance improvements over FNPAG; In conservative entry scenarios, we observe a 29.5\% reduction in 99-th \%-ile post-capture $\Delta V$ and
a 39.3 \% reduction in cases which fail to be captured into the desired orbit relative to FNPAG. 
\end{enumerate} 

\section{OPTIMAL CONTROL PROBLEM}
This section presents a derivation of the optimal control profiles for aerocapture ABAM control. Similar derivations can be found in Ref. \cite{lu2015optimal} for BAM and Ref. \cite{deshmukh2020investigation} for DFC. 
\subsection{Equations of Motion}
The three-dimensional equations of motion of a spacecraft inside the atmosphere of a rotating planet are \cite{miele1989optimal}
\begin{align}
	\dot{r} &= V \sin \gamma, \label{eq: rdotfull} \\
    \dot{\theta} &= \frac{V \cos \gamma \sin \psi}{r \cos \phi}, \\
    \dot{\phi} &= \frac{V \cos \gamma \cos \psi}{r}, \\
    \dot{V} &= -D(\alpha)-g_r \sin \gamma-g_\phi \cos \gamma \cos \psi  \\
    	    &+\Omega^2 r \cos \phi(\sin \gamma \cos \phi-\cos \gamma \sin \phi \cos \psi) , \nonumber \\   	
    	\dot{\gamma} &= \frac{1}{V}\bigl[ L(\alpha) \cos \sigma+\left(V^2 / r-g_r\right) \cos \bigr. \gamma+g_\phi \sin \gamma \cos \psi \\ 
    		&+2 \Omega V \cos \phi \sin \psi \nonumber \\  
    		&\bigl.+\Omega^2  r \cos \phi(\cos \gamma \cos \phi v  +\sin \gamma \cos \psi \sin \phi) \bigr], \nonumber \\
    	\dot{\psi} &= \frac{1}{V}\bigl[ \frac{L(\alpha) \sin \sigma}{\cos \gamma}+\frac{V^2}{r} \cos \gamma \sin \psi \tan \phi+g_\phi \frac{\sin \psi}{\cos \gamma} 		\bigr. \label{eq: psidotfull}\\
    		&-2 \Omega V(\tan \gamma \cos \psi \cos \phi-\sin \phi) \nonumber \\ 
    		& \bigl.+\frac{\Omega^2 r}{\cos \gamma} \sin \psi \sin \phi \cos \phi\bigr], \nonumber
\end{align}
where $r$ is the vehicle's radial distance from the center of the planet, $\theta$ is longitude, $\phi$ is latitude, $V$ is the vehicle's planet-relative velocity, $\gamma$ is the flight path angle of the velocity vector, and $\psi$ is the heading angle of the velocity vector. 
The angular rate vector of the planet self-rotation is $\Omega$, and $g_r$ and $g_\theta$ are the radial and gravitational acceleration components, respectively. 
The lift ($L$) and drag ($D$) accelerations are
\begin{equation} \label{eq: LD}
	L = \frac{1}{2m} \rho V^2 S_{ref} C_L(\alpha), \ D = \frac{1}{2m} \rho V^2 S_{ref} C_D(\alpha).
\end{equation}
Note the dependence of the aerodynamic coefficients on $\alpha$, which differs from most previous aerocapture guidance discussions involving bank angle control in which angle of attack is assumed to be fixed (or prescribed as a function of velocity).
Atmospheric density is defined by $\rho$, $S_{ref}$ is the vehicle's reference area, and $m$ is the vehicle's mass. 
\subsection{Control Variables}
For ABAM, the bank angle, $\sigma$, and angle of attack, $\alpha$, are free to be controlled. As previously noted, $\alpha$ appears in the dynamics indirectly through the aerodynamic coefficients. The bank angle magnitude is constrained as follows
\begin{equation} \label{eq: sigma constraint}
    0 \leq \sigma_{\textrm{min}} \leq \| \sigma \| \leq \sigma_{\textrm{max}} \leq 180,
\end{equation}
where  $\sigma_{\textrm{min}}$ is larger than $0$ degrees (typically $\approx$ 15), and $\sigma_{\textrm{max}}$ is less than $180$ degrees (typically $\approx$ 105-165). These values are set to ensure cross-range control authority. Similarly, the magnitude of angle of attack is bounded as 
\begin{equation} \label{eq: alpha constraint}
    -30 \leq \alpha_{\textrm{min}} \leq \alpha \leq \alpha_{\textrm{max}} \leq 0,
\end{equation}
where $\alpha_{\textrm{min}}$ and $\alpha_{\textrm{max}}$ are negative values within the vehicle's allowable range. Note that we restrict $\alpha$ to be less than zero due to aerodynamic coefficient symmetry around zero and the assumption that the direction of the lift vector will still be controlled by the bank angle. 
\subsection{Constraints and Objectives}
We consider the \textit{apoapsis targeting problem}, where given an initial condition, we wish to find the bank angle and angle of attack control profiles that satisfy the final state constraint
\begin{equation} \label{eq: apoapsis contraint}
    r_a(r_{\textrm{exit}}, V_{\textrm{exit}}, \gamma_{\textrm{exit}}) - r_a^* = 0,
\end{equation}
where $r_{\textrm{exit}}, V_{\textrm{exit}}, \gamma_{\textrm{exit}}$ are the radius, inertial velocity, and inertial flight path angle at atmospheric exit, respectively. $r_a^*$ is the target apoapsis radius, and $r_a$ is defined by keplerian orbital mechanics as
\begin{equation} \label{eq: apoapsis targ}
r_a=a\left(1+\sqrt{1-\frac{V_{\textrm{exit}}^2 r_{\textrm{exit}}^2 \cos ^2\left(\gamma_{\textrm{exit}}\right)}{\mu a}}\right),
\end{equation}
where the semi-major axis, $a$, is
\begin{equation} \label{eq: semimajoraxis}
    a = \frac{\mu}{2\mu / r_{\textrm{exit}} - V_{\textrm{exit}}^2}. \nonumber
\end{equation}
We wish to solve the \textit{optimal aerocapture problem}, which we define as the solution to the \textit{apoapsis targeting problem} which minimizes the propellant ($\Delta V$) required to insert the spacecraft into its final orbit. 
Propellant consumption in the single-burn formulation is
\begin{equation} \label{eq:dv}
\Delta V=\sqrt{2 \mu}\left(\sqrt{\frac{1}{r_a}-\frac{1}{r_a+r_p^*}}-\sqrt{\frac{1}{r_a}-\frac{1}{2 a}}\right),
\end{equation}
where $r_p^*$ is the target periapsis radius and $\mu$ is the planet gravitational parameter.

\subsection{Assumptions and Simplification}
We observe that the objective function and targeting constraint depend only on the terminal values of the longitudinal motion variables ($r, V, \gamma$), which are decoupled from the rest of the equations of motion. 
We ignore small magnitude terms due to planetary rotation and non-spherical gravity, as their effects are minor as long as the lateral motion variables do not appear in a terminal equality constraint \cite{lu2015optimal}. The simplified longitudinal dynamics are  
\begin{align}
    \dot{r} &=V \sin \gamma,  \label{eq: rdot}\\
    \dot{V} &=-D(\alpha)-\frac{\mu \sin \gamma}{r^2}, \label{eq: vdot}\\
    \dot{\gamma} &=\frac{1}{V}\left[L(\alpha) u_1 +\left(V^2-\frac{\mu}{r}\right) \frac{\cos \gamma}{r}\right].\label{eq: gamdot}
\end{align}

Note that in (\ref{eq: gamdot}), $\cos \sigma$ has been replaced with $u_1$ to simplify notation. To ensure an analytical solution is attainable, the aerodynamic coefficients are modeled as linear functions of $\alpha$
\begin{equation} \label{eq: cd fit}
    C_D = C_{D,\alpha} \alpha + C_{D, 0},
\end{equation}
\begin{equation} \label{eq: cl fit}
    C_L = C_{L,\alpha} \alpha + C_{L, 0}.
\end{equation}
The possible values of $\alpha$ are restricted to a range where the maximum $L / D$ error percentage is within $\approx $ 5\%. As shown in \cite{deshmukh2020investigation}, this level of accuracy is valid for blunt bodied aerocapture vehicles. The guidance algorithm presented later will be simulated with the full nonlinear aerodynamics to ensure performance generalizes to the complete model. 

\subsection{Optimal Control Derivation} \label{subsec: OCP derivation}
\begin{definition} \label{def: sigma bam}
Let $\sigma^*_{\textrm{BAM}}$ be the optimal bank angle profile found in \cite{lu2015optimal} for BAM guidance, where $\sigma^*_{\textrm{BAM}} = \sigma_\text{min}$ (lift-up) if $H_\sigma = \lambda_\gamma > 0$, and $\sigma^*_{\textrm{BAM}} = \sigma_\text{max}$  (lift-down) if $H_\sigma < 0$, where $H_\sigma$ is the switching curve associated with $\sigma$. 
\end{definition}
\begin{theorem}
Assume that for the two input case (ABAM), $\sigma^* = \sigma^*_\text{BAM}$, as in Definition \ref{def: sigma bam}. Then during lift-up flight, $\alpha^* = \alpha_{\textrm{min}}$ if $H_{\alpha, \text{up}} > 0$, and $\alpha^* = \alpha_{\textrm{max}}$ if $H_{\alpha, \text{up}} < 0$. During lift-down flight, $\alpha^* = \alpha_{\textrm{min}}$ if $H_{\alpha, \text{down}} > 0$, and $\alpha^* = \alpha_{\textrm{max}}$ if $H_{\alpha, \text{down}} < 0$. Here, $H_{\alpha, \text{up/down}}$ are the switching curves associated with $\alpha$ during lift-up and lift-down flight, respectively. Note that this structure is general, as certain switches do not necessarily have to exist.

\end{theorem}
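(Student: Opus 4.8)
The plan is to apply Pontryagin's Minimum Principle to the reduced longitudinal dynamics \eqref{eq: rdot}--\eqref{eq: gamdot}. Since the objective \eqref{eq:dv} and the targeting constraint are purely terminal, there is no running cost and the Hamiltonian is just the costate-weighted dynamics,
\begin{equation*}
H = \lambda_r V \sin\gamma + \lambda_V\!\left(-D(\alpha) - \frac{\mu \sin\gamma}{r^2}\right) + \frac{\lambda_\gamma}{V}\!\left[L(\alpha)\,u_1 + \left(V^2 - \frac{\mu}{r}\right)\frac{\cos\gamma}{r}\right],
\end{equation*}
with costates $(\lambda_r,\lambda_V,\lambda_\gamma)$ and $u_1 = \cos\sigma$. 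First I would isolate the control-dependent part $-\lambda_V D(\alpha) + \lambda_\gamma L(\alpha)\,u_1/V$. The bank angle enters only through the lift term with coefficient $\lambda_\gamma L(\alpha)/V$; since $L,V>0$ its sign is set by $\lambda_\gamma$, recovering the switching function $H_\sigma=\lambda_\gamma$ of Definition \ref{def: sigma bam}. The theorem takes the resulting $\sigma^*=\sigma^*_{\textrm{BAM}}$ as a hypothesis, so I would not re-derive it.

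The crux is that the affine aerodynamic model \eqref{eq: cd fit}--\eqref{eq: cl fit} makes $L(\alpha)$ and $D(\alpha)$ affine in $\alpha$, so for any fixed $u_1$ the Hamiltonian is affine in $\alpha$. Its $\alpha$-gradient, the angle-of-attack switching function, is
\begin{equation*}
H_\alpha = \frac{\partial H}{\partial \alpha} = \frac{\rho V^2 S_{ref}}{2m}\!\left(-\lambda_V\,C_{D,\alpha} + \frac{\lambda_\gamma\,u_1}{V}\,C_{L,\alpha}\right),
\end{equation*}
which is itself independent of $\alpha$. Affineness forces the minimizer to a bound, so $\alpha^*$ is bang-bang; using the same extremization convention under which Definition \ref{def: sigma bam} assigns lift-up to $\lambda_\gamma>0$, the extremal is $\alpha^*=\alpha_{\textrm{min}}$ when the switching function is positive and $\alpha^*=\alpha_{\textrm{max}}$ when it is negative.

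The two-curve structure then follows directly from the bank-angle hypothesis. Under $\sigma^*=\sigma^*_{\textrm{BAM}}$ the quantity $u_1=\cos\sigma$ is piecewise constant, equal to $\cos\sigma_{\textrm{min}}$ during lift-up and $\cos\sigma_{\textrm{max}}$ during lift-down. Substituting each value into $H_\alpha$ produces the two distinct switching curves
\begin{align*}
H_{\alpha,\textrm{up}} &= \frac{\rho V^2 S_{ref}}{2m}\!\left(-\lambda_V\,C_{D,\alpha} + \frac{\lambda_\gamma\cos\sigma_{\textrm{min}}}{V}\,C_{L,\alpha}\right), \\
H_{\alpha,\textrm{down}} &= \frac{\rho V^2 S_{ref}}{2m}\!\left(-\lambda_V\,C_{D,\alpha} + \frac{\lambda_\gamma\cos\sigma_{\textrm{max}}}{V}\,C_{L,\alpha}\right),
\end{align*}
and applying the bound-selection rule on each phase reproduces the stated conditions exactly. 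The closing remark that certain switches need not exist corresponds to $H_{\alpha,\textrm{up}}$ or $H_{\alpha,\textrm{down}}$ keeping a constant sign over a phase, leaving $\alpha^*$ pinned at one bound there; the adjoint equations govern when these signs change, and hence the actual switch times, but are not needed to establish the bang-bang form.

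The main obstacle is the coupling of the two controls: because $u_1$ multiplies $L(\alpha)$, the control-dependent Hamiltonian carries a bilinear term $\propto \alpha\,u_1$, so $\alpha$ and $\sigma$ are not independent and a naive separate minimization is not a priori justified. The hypothesis $\sigma^*=\sigma^*_{\textrm{BAM}}$ is precisely what resolves this---fixing $u_1$ to one of two constants on each phase makes $H$ affine in $\alpha$ and is exactly why two curves rather than one appear. The remaining point requiring care is sign consistency: the orientation of the inequalities on $H_\alpha$ must match the convention already fixed for $H_\sigma$ in Definition \ref{def: sigma bam} (equivalently, the transversality conditions induced by \eqref{eq:dv}), which I would pin down by evaluating $\partial H/\partial u_1$ and $\partial H/\partial\alpha$ under one common sign convention and confirming that singular arcs, where a switching function vanishes on an interval, are non-generic and may be excluded.
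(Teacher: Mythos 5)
Your proposal is correct and follows essentially the same route as the paper: form the no-running-cost Hamiltonian, fix $u_1=\cos\sigma$ via the assumed BAM bank-angle structure, observe that the linear aerodynamic model makes $H$ affine in $\alpha$, and read the bang-bang law off the sign of the resulting phase-dependent switching function. The only difference is cosmetic: the paper approximates $\cos\sigma_{\textrm{min}}\approx 1$ and $\cos\sigma_{\textrm{max}}\approx -1$ so its two curves carry coefficients $\pm 1$, whereas you retain the exact factors $\cos\sigma_{\textrm{min}}$ and $\cos\sigma_{\textrm{max}}$ (and the positive dynamic-pressure multiplier), which changes nothing about the zero crossings or the conclusion.
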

\begin{proof}
We find the optimal $\alpha$ profile, $\alpha^*$, via forming the Hamiltonian, $H$, and applying Pontryagin's Minimum Principle \cite{pontrjagin1962mathematical} to obtain the optimal control profile, $u^*$ ($u  = \left[ u_1, \ \alpha \right]$), that satisfies the optimality condition, 
\begin{equation}
u^* 
=
\begin{bmatrix}
u_1^* \\
\alpha^*
\end{bmatrix} 
=
\underset{u \in \mathcal{U}}{\arg \min} \ H \left(x, u, \lambda \right),
\end{equation}
where $\mathcal{U}$ is the admissible set of controls. 
The structure of $u_1^*$ is assumed, so the solution will produce $\alpha^*$. There are no running costs, so Hamiltonian is formed by adjoining the costate variables ($\lambda_r, \lambda_v, \lambda_\gamma$) to the dynamics
\begin{equation}
    H = \lambda_r \dot{r} + \lambda_v \dot{v} + \lambda_\gamma \dot{\gamma}.
\end{equation}
Substituting the dynamics from \eqref{eq: rdot}-\eqref{eq: gamdot} yields
\begin{equation} \label{eq: H}
\begin{split}
    H = & \lambda_r \left(V \sin \gamma\right) + \lambda_v \left( -D(\alpha)-\frac{\mu \sin \gamma}{r^2}\right) \\ &+ \lambda_\gamma \left(\frac{1}{V}\left[L(\alpha) u_1 +\left(V^2-\frac{\mu}{r}\right) \frac{\cos \gamma}{r}\right]\right)
\end{split}
\end{equation}
Because $\sigma \in \{ \sigma_{\textrm{min}}, \sigma_{\textrm{max}} \}$, $u_1$ ($u_1 = \cos \sigma$) can take two values. For lift-up flight, $\sigma = \sigma_\textrm{min}$, so $u_1 \approx 1$ since $\sigma_\textrm{min}$ is small. For lift-down flight, $\sigma = \sigma_\textrm{max}$, so $u_1 \approx -1$ since $\sigma_\textrm{max}$ is large. When these values of $u_1$ are substituted into \eqref{eq: H}, there only remains a linear control dependence on $\alpha$ in $H$. We can thus rearrange the Hamiltonian as
\begin{equation}
    H = H_0 + H_{\alpha, \textrm{(up/down)}} \alpha,
\end{equation}
where $H_{\alpha,  \textrm{up}}$ is the switching function associated with $\alpha$ during lift-up flight ($u_1 \approx 1$), $H_{\alpha, \textrm{down}}$ is the switching function associated with $\alpha$ during lift-down flight ($u_1 \approx -1$), and $H_0$ consists of terms that do not depend on the control variable. We can omit $H_0$, as it has no implications for the solution, and substitute in \eqref{eq: LD} for $L(\alpha)$ and $D(\alpha)$ to get 
\begin{align} \label{eq: H nonlinear}
    \tilde{H} &= \left[-\lambda_v \left( q S C_{D, \alpha}\right)  \pm \lambda_\gamma \left( \frac{q}{V} S C_{L,\alpha}\right) \right] \alpha \\
    &= q S \left( -\lambda_v C_{D, \alpha} \pm \lambda_\gamma \frac{C_{L, \alpha}}{V}\right) \alpha \\
    &= H_{\alpha, \textrm{(up/ down)}} \alpha, 
\end{align}
where $q$ is the dynamic pressure. We can drop $q$ and $S$, which do not affect the zero crossing, to get the switching curves
\begin{align}
    H_{\alpha, \textrm{up}}  &= - \lambda_v C_{D, \alpha} + \lambda_{\gamma} \frac{C_{L,\alpha}}{V}, \label{eq: H11}\\
    H_{\alpha, \textrm{down}}  &= - \lambda_v C_{D, \alpha} - \lambda_{\gamma} \frac{C_{L,\alpha}}{V}. \label{eq: H12}
\end{align}
Because $\tilde{H}$ is linear in $\alpha$, it can be minimized by selecting $\alpha_\textrm{max}$ if the switching function is negative, and $\alpha_\textrm{min}$ if the switching function is positive. The result is a bang-bang $\alpha$, where in lift-up flight the optimal control is 
\begin{equation} \label{eq: aoa1}
    \alpha^*= \begin{cases}\alpha_{\text {min }}, & \text { if } H_{\alpha,\textrm{up}}>0 \\ \alpha_{\text {max }}, & \text { if } H_{\alpha,\textrm{up}}<0\end{cases}, 
\end{equation}
and in lift-down flight the optimal control is 
\begin{equation} \label{eq: aoa2}
    \alpha^*= \begin{cases}\alpha_{\text {min }}, & \text { if } H_{\alpha,\textrm{down}}>0 \\ \alpha_{\text {max }}, & \text { if } H_{\alpha,\textrm{down}}<0\end{cases}.
\end{equation}
\end{proof}
\vspace{-2mm}
\begin{remark}
The control in the case where $ H_{\alpha,\textrm{up}}$ and $ H_{\alpha,\textrm{down}}$ equal zero for a non-zero time interval represents a singular arc, and it has been shown to be impossible for similar problems \cite{lu2015optimal}, \cite{deshmukh2020investigation}, but was not investigated in this work. 
\end{remark}
\begin{remark}
Given the assumption that the optimal $\sigma$ profile follows the results in \cite{lu2015optimal}, we expect to see
\begin{equation} \label{eq: sig}
    \sigma^*= \begin{cases}\sigma_{\text {min }}, & \text { if } \lambda_\gamma >0 \\ \sigma_{\text {max }}, & \text { if } \lambda_\gamma<0 \end{cases}
\end{equation}
Notice that this solution is recovered at the intersection of the lift-up and lift-down switching functions, where $H_{\alpha,\textrm{up}} = H_{\alpha,\textrm{down}}$. We can therefore define the bank angle switching ($H_\sigma$) function accordingly, dropping the constant multiplication factor
\begin{equation}
    H_\sigma = H_{\alpha,\textrm{up}} - H_{\alpha,\textrm{down}} = \lambda_\gamma
\end{equation}
\end{remark}
\begin{remark} \label{remark 3}
We assume that the control profile consists of at most three switches ($\sigma$ may switch once and $\alpha$ may switch once both before and after the $\sigma$ switch). The following numerical analysis supports this assumption. Considering a nominal scenario where the vehicle begins flying at $\sigma_{\textrm{min}}$ and $\alpha_{\textrm{min}}$, the first switching time occurs when $H_{\alpha, \textrm{up}}$ crosses zero and corresponds to an $\alpha$ switch. The second occurs when $H_{\alpha, \textrm{up}} = H_{\alpha, \textrm{down}}$ and corresponds to a $\sigma$ switch. The third occurs when $H_{\alpha, \textrm{down}}$ crosses zero and corresponds to the second $\alpha$ switch. 

While we assume the vehicle begins flying $\sigma_{\textrm{min}}$ and $\alpha_\textrm{min}$ for consistency with FNPAG, it remains possible for other permutations of control sequences to exist. 
For example, a full-lift down flight is possible, and may be thought of as a special case of the bang-bang control scheme, where the duration of the $\sigma = \sigma_\textrm{min}$ phase is zero. In this case, $H_{\alpha, \textrm{down}}$ governs a potential $\alpha$ switch from $\alpha_\textrm{min}$ to $\alpha_\textrm{max}$. However, it is also possible for the duration of the $\alpha = \alpha_\textrm{max}$ phase to be zero. This would result in a trajectory where no control variable switches occur. 
For these reasons, the three-switch control profile is not an inherently limiting architecture.  
\end{remark}

\subsection{Numerical Analysis using GPOPS}
To lend validity to the theory presented above, the optimal control problem was solved numerically using the Gauss Pseudospectral Optimization Software (GPOPS) \cite{benson2005gauss, rao2010algorithm}. 
When encoding the problem, no switching times are enforced, so any similarities between the numerical results and control theory have arisen organically.  

The longitudinal dynamics used are as in \eqref{eq: rdot}-\eqref{eq: gamdot}. We enforce a final state constraint according to the apoapsis targeting criteria in \eqref{eq: apoapsis contraint}. For convergence improvements, a new control variable is introduced $u = \left[ u_1, \quad u_2 \right] = \left[ \cos \sigma, \quad \sin \sigma \right]$. Note that this introduces a path constraint $u_1^2 + u_2^2 = 1$.
The constraint in (\ref{eq: sigma constraint}) therefore becomes $u_{1, \textrm{min}} \leq u_1 \leq u_{1, \textrm{max}}$, where $u_{1,\textrm{max}} = \cos \sigma_{\textrm{min}}$ and $u_{1, \textrm{min}} = \cos \sigma_{\textrm{max}}$.
The cost to be minimized is $\Delta V$, as in (\ref{eq:dv}). Linear aerodynamic models are implemented according to (\ref{eq: cd fit}) and (\ref{eq: cl fit}). 
Initial guesses are obtained by propagating dynamics given an initial state, using a constant $\alpha$ profile, and a bang-bang $\sigma$ profile. 
The atmospheric density profile is constructed via polynomial fit to the UranusGRAM nominal density profile. 

We use GPOPS to find the optimal solution in nominal, shallow, and steep entry conditions. Once the solution is found, the state and estimated costate information are extracted to plot the switching functions in \eqref{eq: H11}-\eqref{eq: H12} alongside the optimal $\alpha$ and $\sigma $ profiles, shown in Fig. \ref{Fig: GPOPS joint fig}. 

Let us first consider the \textit{nominal EFPA} case. The vehicle begins flying lift-up, with $u_1 \approx 1$. Therefore, we expect $H_{\alpha, \textrm{up}}$ to govern the first $\alpha$ switch. 
$H_{\alpha, \textrm{up}}$ goes from positive to negative just before $t \approx 180$, and $\alpha$ goes from $\alpha_{\textrm{min}}$ to $\alpha_{\textrm{max}}$ at about the same time. 
The next predicted event is a $\sigma$ switch at the point where $H_{\alpha, \textrm{up}} = H_{\alpha, \textrm{down}}$. This occurs at $t \approx 226$, and we see that $\sigma$ switches from $\sigma_{\textrm{min}}$ to $\sigma_{\textrm{max}}$ ($u_1 \approx -1$) at $t \approx 226$. 
Because the vehicle is now flying lift-down, $u_1 \approx -1$, and we expect a second $\alpha$ switch when $H_{\alpha, \textrm{down}}
$ crosses zero. 
This occurs at $t \approx 253$, and the corresponding $\alpha$ switch happens near that time. 

In the \textit{shallow EFPA} case, the vehicle begins flying lift-down. Thus, we expect $H_{\alpha, \textrm{down}}$ to govern a potential $\alpha$ switch. $H_{\alpha, \textrm{down}}$ goes from positive to negative at $t \approx 449$, and the corresponding $\alpha$ switch from $\alpha_\textrm{min}$ to $\alpha_\textrm{max}$ occurs at that time. $H_{\alpha, \textrm{up}}$ and $H_{\alpha, \textrm{down}}$ next begin to converge on each other, but do not cross, suggesting that no $\sigma$ switch occurs. As observed in the optimal $\sigma$ profile, there are no switches, thus $H_{\alpha, \textrm{up}}$ never becomes active and a second $\alpha$ switch does not occur. 

In the \textit{steep EFPA} case, we begin lift up, so $H_{\alpha, \textrm{up}}$ is active. $H_{\alpha, \textrm{up}}$ does not cross zero before intersecting with $H_{\alpha, \textrm{down}}$, which signifies a bank angle switch at $t \approx 296$. We see that $\sigma$ switches from $\sigma_{\textrm{min}}$ to $\sigma_\textrm{max}$ at this time. No $\alpha$ switch occurs, which is expected since the $H_{\alpha, \textrm{up}}$ did not cross zero. With the $\sigma$ switch to $\sigma_\textrm{max}$, $H_{\alpha, \textrm{down}}$ becomes active. While $H_{\alpha, \textrm{down}}$ does cross zero, this crossing occurs before the function is active. Thus, we expect no $\alpha$ switch, which is what we observe.

\begin{figure*}[ht!]
\centering
\includegraphics[width=\textwidth]{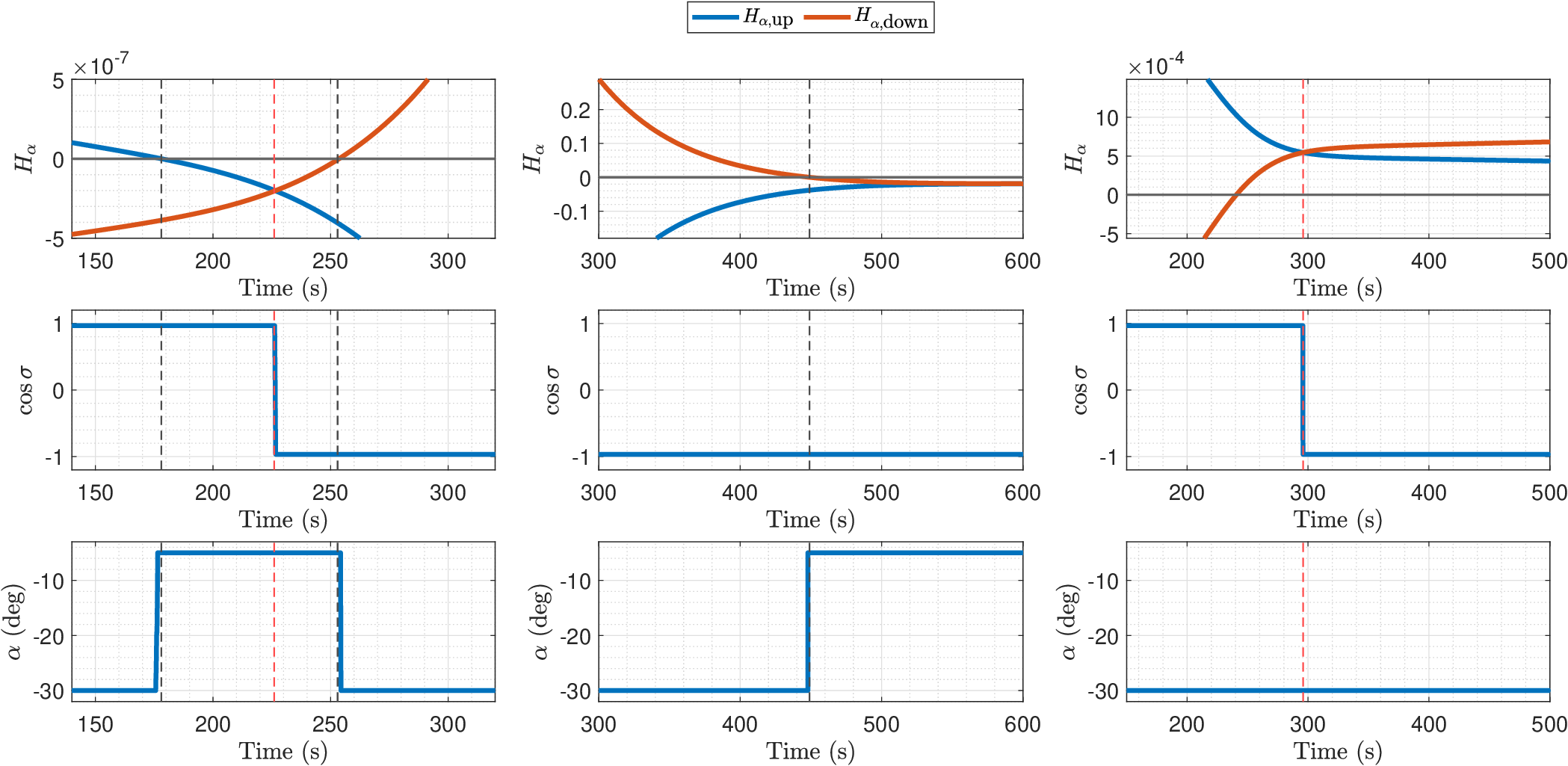} 
\vspace{-7mm} \caption{Switching functions (top row), optimal bank angle (center row) and angle of attack (bottom row) profiles. Switching functions plotted from GPOPS state and estimated costate output, controls plotted from GPOPS control solution. Left column corresponds to the nominal entry flight path angle (EFPA) case, center column corresponds to a shallow EFPA case, and right column corresponds to a steep EFPA case. Black vertical dashed lines correspond to $\alpha$ switching times and red vertical dashed lines correspond to $\sigma$ switching times.} \label{Fig: GPOPS joint fig}. 
\end{figure*}

Despite no enforcement of switching times in the optimal control software, the numerical results are consistent with the control theory presented in the previous section. This suggests that the theory is plausibly valid under our set of assumptions. The following section will present a novel algorithm that aims to exploit the bang-bang solution structure for use in an ABAM guidance algorithm. 

\section{AEROCAPTURE GUIDANCE ALGORITHM}
This section presents an ABAM aerocapture guidance algorithm (ABAMGuid) based on the analysis in the previous section. We seek to enable closed-loop ABAM guidance via numerically solving for the three discrete switching times in-the-loop of guidance using a combination of numerical and heuristic optimization methods. Note that this guidance algorithm is designed to fly the optimal solution profile previously identified, but is not a true globally optimal algorithm, which would require solving the constrained nonlinear control problem in real time at each guidance call. Instead, our solution reduces the problem complexity to a three variable solve, which is much more computationally efficient and will be shown to perform well in practice.  

\subsection{A Four-Phase Algorithm} 
\textbf{Phase 1} is defined from entry interface to the first switching time, $t_{s,1}$. This phase is activated via a g-load trigger (set to 0.1 g's), indicating that the vehicle has enough control authority to begin active guidance. During this phase, the predictor uses the bang-bang solutions discovered in the previous section, parameterized by three switching time variables, while the Nelder-Mead method \cite{nelder1965simplex} solves for the switching times which minimize the apoapsis targeting error. That is, we seek to minimize
\begin{equation} \label{Eq: apoapsis error}
    f(t_{s,1}, t_{s,2}, t_{s,3}) = \frac{1}{2} \left( r_a - r_a^*\right)^2.
\end{equation}
A zeroth-order method was selected to avoid invertibility issues that can arise when inverting a Jacobian, as is required in the Gauss-Newton method used in \cite{Lu2}. The simplex $S$ is initialized by adding a constant value to each basis given an initial best guess $(t_{s,1}^0, t_{s,2}^0, t_{s,3}^0)$, which results in $n$ = 4 points in the simplex. Temporal consistency is enforced by penalizing violations of $t_{s,1} \leq t_{s,2} \leq t_{s,3}$ with large function values. Iteration is stopped once the stopping criteria is met
\begin{equation}
    \text{std}(S_f)< \epsilon_{NM},
\end{equation}
where $S_f$ are the function values of each point in the simplex and $\epsilon_{NM}$ is a preselected satisfactorily small value. During phase 1, guidance commands $\sigma_{\textrm{min}}$ and $\alpha_{\textrm{min}}$.

\textbf{Phase 2} is defined from $t_{s,1}$ to $t_{s,2}$ and is triggered when $t > t_{s,1}$. This phase utilizes a similar solve as phase 1, using Nelder-Mead to minimize (\ref{Eq: apoapsis error}). Because $t > t_{s,1}$, the search is over two variables $t_{s,2}$ and $t_{s,3}$, and the simplex size is $n$ = 3. During phase 2, guidance commands $\sigma_{\textrm{min}}$ and $\alpha_{\textrm{max}}$.

\textbf{Phase 3} is defined from $t_{s,2}$ to $t_{s,3}$ and is triggered when $t > t_{s,2}$. In phase 3, we seek to minimize (\ref{Eq: apoapsis error}). Since $t_{s,1}$ and $t_{s,2}$ are in the past, the problem becomes a univariate optimization problem and the apoapsis error is minimized by solving for $t_{s,3}$ using the Newton-Rhapson method and a secant scheme derivative approximation using the two previous iterations. This update is

\begin{equation}
    t_{s,3}^{(k+1)} = t_{s,3}^{(k)} - \frac{z(t_{s,3}^{(k)})}{\left[ z(t_{s,3}^{(k)}) - z(t_{s,3}^{(k-1)}) \right]} \left( t_{s,3}^{(k)} - t_{s,3}^{(k-1)}\right).
\end{equation}
Iteration is stopped when the stopping criteria is met
\begin{equation}
    \| z(t_{s,3}^{(k+1)}) \frac{\partial z(t_{s,3}^{(k+1)})}{\partial t_{s,3}}  \| \leq \epsilon_{NR},
\end{equation}
where $\epsilon_{NR}$ is a preselected small value and $z$ is defined as the apoapsis targeting error $r_a - r_a^*$. Further discussion on this process can be found in \cite{Lu2}. During phase 3, guidance commands $\sigma_{\textrm{max}}$ and $\alpha_{\textrm{min}}$.

\textbf{Phase 4} is defined from $t_{s,3}$ to atmospheric exit $t_f$ and is triggered when $t > t_{s,3}$. Identical to phase two of FNPAG, we seek to find a constant magnitude bank angle that minimizes the apoapsis targeting error using Brent's method \cite{brent2013algorithms}. Angle of attack is held constant at $\alpha_{\textrm{min}}$. The constant bank angle found via Brent's method will not be equal to $\sigma_{\textrm{max}}$ due to trajectory dispersion, but will typically have large magnitude in near nominal conditions. 

\subsection{Lateral Guidance, Constraints, and Uncertainty}
Lateral guidance is not considered in this work, but discussion on a separate logic channel to utilize $\sigma$ reversals for lateral targeting can be found in \cite{lu2015optimal} and \cite{Lu2}. Similarly, heating and load constraints can be implemented via a predictive approach developed in \cite{Lu2} but are not considered here. 
Modeling uncertainties in vehicle parameters and atmospheric density can degrade performance due to inaccurate prediction of aerodynamic accelerations.
To mitigate this, we deploy the filtering techniques explained in \cite{lu2015optimal} to estimate the deviation of sensed acceleration from expected. 
\vspace{-1mm}
\section{SIMULATION RESULTS}
This section presents closed-loop Uranus Orbiter and Probe (UOP) aerocapture simulation results using the proposed algorithm, ABAMGuid, and FNPAG. FNPAG was implemented following the derivation in \cite{lu2015optimal}.
\subsection{Simulation Methodology}
Aerocapture trajectories are simulated using a high-fidelity 3-DoF simulation developed in MATLAB/ Simulink. The full 3-DoF dynamics from (\ref{eq: rdotfull})-(\ref{eq: psidotfull}) are implemented along with the complete nonlinear aerodynamics model. A Monte Carlo (MC) set of 8,000 simulations is performed with entry and target conditions, vehicle parameter dispersions, and simulation settings summarized in Table \ref{table: MC dispersions}. 
\begin{table}[t]
\caption{Monte Carlo Simulation dispersions and parameters}
\vspace{-6mm}
\label{table: MC dispersions}
\begin{center}
\begin{tabular}{|c||c|}
\hline
Parameter & Value\\
\hline
EI Altitude, $h_0$ (km) & 1,000 \\
\hline
Inertial Velocity, $V_0$ (km/s) &  23.78\\
\hline
Entry Flight Path Angle, $\gamma_0$ (deg) & -10.8 $\pm$ 0.189 3-$\sigma$ (Baseline), \\ &-10.8 $\pm$ 0.622 3-$\sigma$ (Conservative)\\
\hline
Entry Longitude, $\theta$ (deg) & 262.12 \\
\hline
Entry Latitude, $\phi$ (deg)& -16.02 \\
\hline
Entry Azimuth Angle, $\psi_0$ (deg) & 117.45 \\
\hline
Mass, $m_0$ (kg) & 4,063\\
\hline
Target Apoapsis Altitude (km) & 2,000,000\\
\hline
Target Periapsis Altitude (km) & 4,000\\
\hline
Bank Angle Limits (deg) & 15 $\leq \| \sigma \| \leq $ 165 \\
\hline
Angle of Attack Limits (deg) & -25 $\leq \alpha \leq $ -10 \\
\hline
\end{tabular}
\end{center}
\end{table}
For each simulation, the true density profile is varied by sampling the UranusGRAM atmosphere model. 
The on-board atmosphere model is created by fitting a polynomial to the natural log of the nominal GRAM density profile. 
\subsection{Results}
Four MC sets are presented. FNPAG is compared to the new guidance algorithm, ABAMGuid, given a baseline and conservative set of entry states. The baseline set is intended to be a realistic representation of potential mission uncertainties and is denoted with (B) in Table \ref{table: DV Stats} and Table \ref{table: failure breakdown}. The conservative set is intended to be more challenging and is used to stress-test the algorithms by increasing the 3-$\sigma$ delivery uncertainties \cite{mages2025mission}, denoted with (C). The $\Delta V$ statistics are summarized in Table \ref{table: DV Stats}. Cases where the probe fails to be captured are omitted from $\Delta V$ statistics. 

Note that mission success is defined as having complete post-aerocapture operations. 
Failed simulations are further categorized as \textit{landers} in cases where the resulting orbital period is less than 10 days, and \textit{hyperbolic} in cases where the orbital period is greater than 2.5 years. A breakdown of failures and failure modes is included in Table \ref{table: failure breakdown}.
\begin{table}[t]
\caption{Monte Carlo $\Delta V$ statistics}\
\label{table: DV Stats}
\begin{center}
\vspace{-7mm}
\begin{tabular}{|c||c||c||c|}
\hline
Case & $\Delta V$ mean & $\Delta V$ 3-$\sigma$ & $\Delta V$ 99th $\%$-ile\\
\hline
FNPAG (B) & 35.2 & 43.0 & 81.9 \\
\hline
\textbf{ABAMGuid} (B) & \textbf{32.8} & \textbf{37.6} & \textbf{74.1} \\
\hline
FNPAG (C) & 43.0 & 83.4 & 167.0  \\ 
\hline
\textbf{ABAMGuid} (C) & \textbf{35.7} & \textbf{58.8} & \textbf{119.9} \\
\hline
\end{tabular}
\end{center}
\end{table} 
In the baseline entry set, the proposed algorithm results in a 16.7\% improvement in 99th percentile $\Delta V$. In the conservative entry set, the proposed algorithm results in a 29.5\% improvement in 99th percentile $\Delta V$ and 825 fewer failed simulations (39.3\% reduction), demonstrating that the new method's biggest improvement is performance in stressful conditions. 
Individual simulation results from the conservative MC sets are visualized in Fig. \ref{Fig: mc failures} with a black dot for passed runs, and a red circle for failed runs.
The entry flight path angle (EFPA) of each run is plotted on the x-axis, along with the resulting peak g-load (top row) on the y-axis. The bottom figure is a 1-D representation of simulation outcomes by algorithm to enable direct comparison of entry corridor width. 
We observe a clear increase in the width spanned by the black lines compared to red, suggesting that ABAMGuid enables mission success over a wider entry corridor than FNPAG. 
\begin{figure}[t!]
\centering
\includegraphics[width=0.49\textwidth]{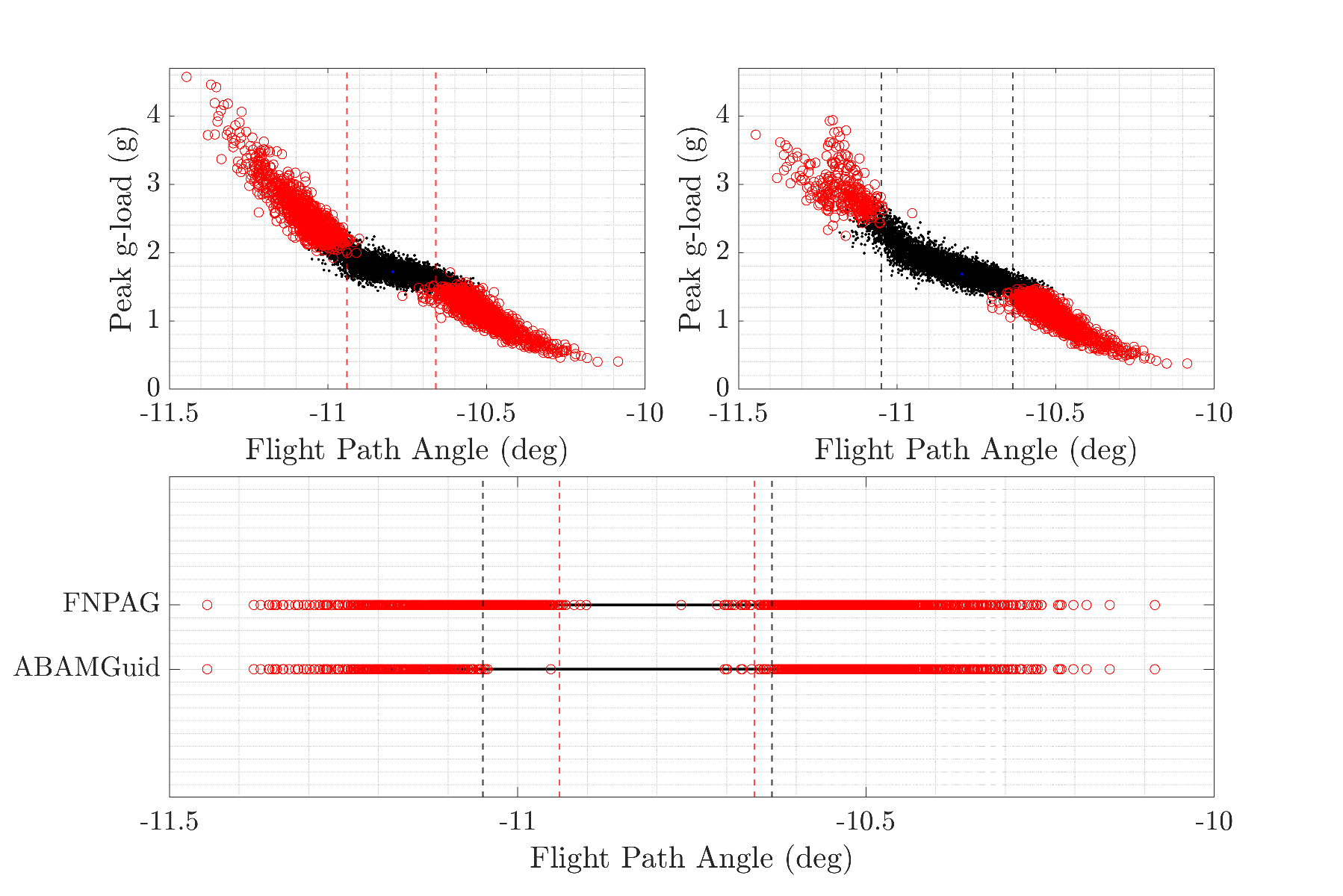} 
\vspace{-3mm}
\caption{MC simulation peak g-load (top row) and outcome (bottom) results as functions of entry flight path angle with conservative entry states. Black points indicate passing runs and red circles indicate failures. The vertical dashed lines are placed such that there is a $\geq$ 95\% success rate at a given EFPA between the lines, with red corresponding to FNPAG, and black to ABAMGuid. Top right corresponds to ABAMGuid, top left to FNPAG.} \label{Fig: mc failures}
\end{figure}
\begin{table}[t]
\caption{Breakdown of Monte Carlo simulation failures}
\label{table: failure breakdown}
\begin{center}
\vspace{-4mm}
\begin{tabular}{|c||c||c||c|}
\hline
Case & Failures (Pass \%) & Lander & Hyperbolic \\
\hline
FNPAG (B) & 6 (99.93) & 0 & 6  \\
\hline
\textbf{ABAMGuid} (B) & \textbf{1 (99.99)} &  0 & \textbf{1} \\
\hline
FNPAG (C) & 2098 (73.78) & 324 & 1774 \\ 
\hline
\textbf{ABAMGuid} (C) & \textbf{1273 (84.09)} & \textbf{101} & \textbf{1172}  \\
\hline
\end{tabular}
\end{center}
\end{table}

\section{CONCLUSIONS}
We present ABAMGuid, a novel aerocapture guidance algorithm for ABAM inspired by the derivation of a three-switch bang-bang optimal control solution.
We validated the optimal solution in GPOPS and rigorously tested ABAMGuid through high-fidelity Monte Carlo analysis. 
As the presented algorithm is intended to be run online, run time and convergence may be critical to mission success. While simulation run-time was not observed to be significantly impacted in comparison to FNPAG, future work could include a formal assessment of the computational implications of solving the multi-variable optimization problem in algorithm phases 1 and 2. 
Further, the Nelder-Mead method does not enjoy the convergence guarantees of Newton-Rhapson. 
In our implementation, care was taken to initialize a good initial simplex, but future work could be done to identify if and where breakdowns occur in this process. 
Lastly, future work should assess the affects of incorporating lateral guidance and heating/ load constraints. 

\addtolength{\textheight}{-12cm}   


\bibliographystyle{IEEEtran}
\vspace{-3mm}\bibliography{references}

\end{document}